\definecolor{refkey}{rgb}{0.65,0.38,0.15}
\numberwithin{equation}{section}
  \newtheorem{theorem}{Theorem}[section]
 \newtheorem{lemma}[theorem]{Lemma}
\theoremstyle{definition}
 \newtheorem{definition}[theorem]{Definition}
\newcommand{\N}{\ensuremath{\mathbb N}} 
\newcommand{\R}{\ensuremath{\mathbb R}} 
\newcommand{\beq}{\begin{equation}}
\newcommand{\eeq}{\end{equation}}
\newcommand{\be}{\begin{enumerate}}
\newcommand{\ee}{\end{enumerate}}
\newcommand{\bi}{\begin{itemize}}
\newcommand{\ei}{\end{itemize}}
\newcommand{\rr}{\mathbb R}
\DeclareMathOperator{\lip}{lip\kern-0.8pt}
\DeclareMathOperator{\Lip}{Lip\kern-0.8pt}
\DeclareMathOperator{\dt}{\mathrm{d}t}
\definecolor{skyblue}{rgb}{0,0.4,0.6}
\definecolor{red}{rgb}{0.6,0,0}
\definecolor{green}{rgb}{0,0.6,0}
\definecolor{aquam}{rgb}{0.5,1.0,1.0}
\definecolor{bbrown}{rgb}{0.75,0.38,0.15}
\definecolor{Cyan}{rgb}{0,0.6,0.6}
\definecolor{Darkblue}{rgb}{0,0,0.9}
\definecolor{Dodgerblue2}{rgb}{0,0.5,1}
\definecolor{Green}{rgb}{0,0.5,0.1}
\definecolor{dGreen}{rgb}{0,0.5,0}
\definecolor{Kahki}{rgb}{1,1,0.5}
\definecolor{Magenta}{rgb}{1,0,1}
\definecolor{bMagenta}{rgb}{1,.6,1}
\definecolor{negyedik}{rgb}{0.9,0.6,0}
\definecolor{Orange}{rgb}{0.8,0.3,0}
\definecolor{dOrchid}{rgb}{0.7,0.2,0.4}
\definecolor{Orchid}{rgb}{1,0.5,1}
\definecolor{Purple}{rgb}{0.65,0.07,0.85}
\definecolor{Royalblue}{rgb}{0.6,0.85,0.87}
\definecolor{Tan}{rgb}{0.54,0.42,0.23}
\definecolor{bTan}{rgb}{0.94,0.82,0.63}
\definecolor{Turquoise}{rgb}{0,0.85,0.87}
\definecolor{Yellow}{rgb}{1,1,0}
\definecolor{bYellow}{rgb}{1,1,0.6}
\definecolor{bRed}{rgb}{1,0.7,0.7}
\definecolor{dRed}{rgb}{0.7,0,0}
\definecolor{dRed}{rgb}{1,0,0}
\definecolor{boxcolb}{rgb}{0.87,0.77,0.75}
\definecolor{boxcol}{rgb}{0.6,0.85,0.87}
\definecolor{boxcolgreen}{rgb}{0.64,0.93,0.79}
\definecolor{boxcolaa}{rgb}{.75,.99,.70}
\definecolor{boxcolbb}{rgb}{0.39,0.50,0.56}
\definecolor{boxcolcc}{rgb}{1,0.81,0.65}
\definecolor{yy}{rgb}{0.43,0.21,.18}
\definecolor{gA}{gray}{0.5}
\definecolor{gB}{gray}{0.8}
\definecolor{gC}{gray}{0.9}
\title{ Strong one-sided density without uniform density}
\author{Zolt\'an Buczolich\thanks{\scriptsize
This author was supported by the Hungarian National Research, Development and Innovation Office--NKFIH, Grant 124003.
},
Department of Analysis, ELTE E\"otv\"os Lor\'and\\
University, P\'azm\'any P\'eter S\'et\'any 1/c, 1117 Budapest, Hungary\\
email: \texttt{zoltan.buczolich@ttk.elte.hu}\\
{\tt  http://buczo.web.elte.hu}\\
ORCID Id: 0000-0001-5481-8797
   \smallskip\\
  Bruce Hanson, Department of Mathematics,\\ Statistics and Computer Science,\\ St.\ Olaf College,
Northfield, Minnesota 55057, USA\\
{email:} \texttt{hansonb@stolaf.edu}
  \smallskip\\
 Bal\'azs Maga\thanks{\scriptsize This author was supported by the \'UNKP-20-3 New National Excellence Program of the Ministry for Innovation and Technology from the source of the National Research, Development and Innovation Fund, and by the Hungarian National Research, Development and Innovation Office–NKFIH, Grant 124749.},
Department of Analysis, ELTE E\"otv\"os Lor\'and\\
University, P\'azm\'any P\'eter S\'et\'any 1/c, 1117 Budapest, Hungary\\
 email: magab@caesar.elte.hu \\{\tt  http://magab.web.elte.hu/}
   \smallskip\\
and
   \smallskip\\
 G\'asp\'ar V\'ertesy\thanks{\scriptsize This author was supported by the \'UNKP-20-3 New National Excellence Program of the Ministry for Innovation and Technology from the source of the National Research, Development and Innovation Fund, and by the Hungarian National Research, Development and Innovation Office–NKFIH, Grant 124749.
 \newline\indent {\it Mathematics Subject
Classification:} Primary :   28A05,  Secondary :  28A75.
\newline\indent {\it Keywords:}   strong one-sided density, uniform density type.},
 Department of Analysis, ELTE E\"otv\"os Lor\'and\\
University, P\'azm\'any P\'eter S\'et\'any 1/c, 1117 Budapest, Hungary\\
email: vertesy.gaspar@gmail.com\
}
\date{\today}
\begin{document}
\maketitle

\newpage

 \begin{abstract}
 In this paper we give an example of a closed, strongly one-sided dense set which is not of uniform density type. We also  show that there is  a set of uniform density type which is not of strong uniform density type.
   \end{abstract}

 \section{Introduction}\label{*secintro}

 The  ``big Lip'' and ``little lip'' functions are defined as follows:

 \begin{equation}
  \Lip
  f(x)= \limsup_{r\to 0^+}M_f(x,r),\qquad\label{Lipdef}
  \lip
  f(x)= \liminf_{r\rightarrow 0^+}M_f(x,r),
 \end{equation}
where
$$M_f(x,r)=\frac{\sup\{|f(x)-f(y)| \colon |x-y| \le r\}}r.$$

While the definition of the $\Lip$ function has a long history, the definition of
$\lip f$ is more recent see \cite{[Cheeger]} and  \cite{[Keith]}.
For some more recent appearances of $\lip f$ we refer to \cite{BaloghCsornyei}, \cite{Hanson}, \cite{Hanson2},  \cite{Hanson3},\cite{BHRZ}, 
\cite{MaZi}, and \cite{[Zi]}. 

In our recent research project (see \cite{[BHMVlip]}, \cite{[BHMVlipchar]}, and   \cite{[BHMVlipap]}) we were primarily concerned with the characterization of $\Lip 1$ and $\lip 1$ sets. Notably, a set $E \subset \rr$ is a $\Lip 1$ ($\lip 1$) set if there exists a continuous function $f$ defined on $\rr$ such that $\Lip f=\mathbf{1}_E$ ($\lip f = \mathbf{1}_E$.) In the aforementioned papers we demonstrated that these properties are intimately related to certain density notions introduced in \cite{[BHMVlip]}, such as weak density (WD),  strong  one-sided density (SOSD), uniform density type (UDT), and strong uniform density type (SUDT).  (In Section \ref{prelim*} the reader can find the definition of UDT sets as well as the definitions of several other less well-known concepts appearing in this introduction.)   
 In \cite{[BHMVlipchar]} the $\lip 1$ sets were characterized  as countable unions of closed sets which are strongly one-sided dense. 
 Such a nice characterization of $\Lip 1$ sets  has not yet been discovered.  
  The  main result of \cite{[BHMVlip]} states   that if $E$ is $G_\delta$ and $E$ has UDT then there exists a continuous function $f$ satisfying $ \Lip f =\mathbf{1}_E$, that is, the set $E$ is $ \Lip 1$. 
 On the other hand, in \cite{[BHMVlipap]} we showed that the converse of this statement does not hold.  There exist $\Lip 1$ sets which are not UDT.
 Moreover, in  \cite{[BHMVlip]} we showed that if $E\subset {\ensuremath {\mathbb R}}$ is $ \Lip 1$ then $E$ is a weakly dense $G_\delta$  set.    We also showed that there exists a weakly dense $G_\delta$ set $E\subset {\ensuremath {\mathbb R}}$ which is not $ \Lip 1$, thus this condition on $E$ is necessary, but not sufficient. 
 We can summarize some of the above results as
\begin{equation} \label{Lip_implication_chain}
 G_{\delta}\& \text{UDT}\ \substack{\Rightarrow \\ \not \Leftarrow}\ \Lip 1\ \substack{\Rightarrow \\ \not \Leftarrow}\  G_{\delta}\& \text{WD}.
\end{equation}

 The connection between  these  density  properties  and $\Lip 1$ and $\lip 1$ sets established by the abovementioned results  provide the motivation for examining the intrinsic relationships between these properties.    For instance, in \cite{[BHMVlip]} we showed that UDT sets are strongly one-sided dense. Coupled with trivial observations, we  obtained  the following chain of implications:
\begin{equation} \label{eq:density_implication_chain}
 \text{SUDT} \Rightarrow \text{UDT} \Rightarrow \text{SOSD} \Rightarrow \text{WD}.
\end{equation}

Partially inspired by the  comments of the referee of \cite{[BHMVlip]},  we were interested in  determining    whether any  of the  implications in 
\eqref{eq:density_implication_chain} 
  are   reversible. Concerning the last one,  showing that WD does not imply SOSD is very easy.  Take for example the set 
 \begin{equation}\label{WD_not_SOSD}
 E=\{ 0 \}\cup \bigcup_{n=2}^{\oo} [n^{-n-0.5},n^{-n}],
 \end{equation}  
 which   is WD, but not SOSD at $0$.
However, the other cases turned out to be more difficult. 
The main results of our paper  provide answers to   these questions: in Theorem \ref{thm:main} 
 we show that the SUDT property is  strictly stronger  than the UDT property by giving an example of a UDT set which is not SUDT. Moreover, 
 in Theorem \ref{*thmudtnosudt} 
we provide an example of a closed, strongly one-sided dense set which is not UDT.

So in this paper we prove:
\begin{equation} \label{*revnoimplication_chain}
 \text{SUDT} \not\Leftarrow \text{UDT} \not\Leftarrow \text{SOSD} \not\Leftarrow \text{WD}.
\end{equation}

 Finally, a result with a somewhat different flavor is provided  concerning   UDT and SUDT sets:  we prove that any measurable set equals an SUDT set almost everywhere.

 In measure theory people  have gotten   used to the fact that usually a ``less nice" set coincides with a ``nice" set modulo a set of measure zero, and quite often, but not always, it is not too difficult to prove it. In our research project concerning 
$\lip 1$ and $\Lip 1$ sets we have already looked at this type of questions.  
In \cite{[BHMVlipap]}
 we proved that there exists a measurable SUDT set $E$  such that for any $G_\delta$ set $\widetilde{E}$ satisfying $|E\Delta\widetilde{E}|=0$ the set $\widetilde{E}$ does not have UDT. 
 In the same paper we also showed that modulo sets of zero Lebesgue measure any measurable set coincides with a $\Lip 1$ set  (this was a case when it was not that easy to prove that ``less nice" can be approximated by ``nice" modulo a set of measure zero).   
 In fact, these two results imply that there exist $\Lip 1$ sets not having UDT, a result we mentioned earlier. 
 However, \eqref{WD_not_SOSD} provides a much simpler example of such a set. It is easy to see that for $f(x):=\int_0^x \mathbf{1}_E(t) \dt$ we have $\Lip f = E$ (or we can apply Theorem 3.1 from \cite{[BHMVlip]} to $E$).

\section{ Preliminary definitions  }\label{prelim*}
 
 In this section, we define the four density notions our paper  is concerned   with.  We denote the closure of the set $A$ by $\overline{A}$, its complement by $A^{c}$,  and its Lebesgue measure by $|A|$, if it exists. 

 \begin{definition} The set $E$ is {\it strongly one-sided dense}   (SOSD)  at $x$ if for any sequence 
 $r_n \to 0^+$  we have $$\max\Big \{\frac{|E \cap [x-r_n,x]|}{r_n},\frac{|E \cap [x,x+r_n]|}{r_n}\Big \}\to 1.$$
The set $E$ is {\it strongly one-sided dense} if $E$ is strongly one-sided dense at every point $x \in E$.
\end{definition}

  \begin{definition}\label{*defudt}
Suppose that $E\subseteq\mathbb{R}$ is measurable and $\gamma,\delta>0$.
Let
  \begin{displaymath} E^{\gamma,\delta}=\left\{x\in\mathbb{R}:
 \forall r\in (0,\delta],\texttt{}
 \max\left\{\frac{|(x-r,x)\cap E|}{r},\frac{|(x,x+r)\cap E|}{r}\right\}\geq\gamma \right\}.
\end{displaymath}

We say that $E$ has uniform density type (UDT) if there exist sequences $\gamma_n\nearrow 1$ and $\delta_n\searrow 0$ such that $E\subseteq  \bigcap_{k=1}^{\infty}  \bigcup_{n=k}^{\infty}E^{\gamma_n,\delta_n}$.

We say that $E$ has strong uniform density type (SUDT) if there exist sequences $\gamma_n\nearrow 1$ and $\delta_n\searrow 0$ such that $E\subseteq  \bigcup_{k=1}^{\infty}  \bigcap_{n=k}^{\infty}E^{\gamma_n,\delta_n}$.
\end{definition}

 
  \begin{definition}\label{weak dense}
Given a sequence of non-degenerate closed intervals $\{I_n\}$, we write {\em $I_n\to x$} if $x \in I_n$ for all
$n \in  \mathbb{N}$ and $|I_n|\to 0$.

The measurable set
$E$ is {\it weakly dense}  (WD)  at $x$ if there exists $I_n \to x$ such that $\frac{|E \cap I_n|}{|I_n|}\to 1$.
The set $E$ is {\it weakly dense} if $E$ is weakly dense at $x$ for each $x \in E$.

\end{definition}

 \section{ SOSD $\not\Rightarrow $ UDT }\label{maina*}

 The following theorem establishes that the second implication in \eqref{eq:density_implication_chain} cannot be reversed.

 \begin{theorem} \label{thm:main} There exists a closed, strongly one-sided dense set which does not have UDT. \end{theorem}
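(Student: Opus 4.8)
The plan is to build the set $E$ as a countable union of carefully placed closed "blocks," together with a single limit point (say $x=0$) at which the UDT property is destroyed, while strong one-sidedness is preserved everywhere. The guiding principle is this: SOSD at a point $x$ only requires that for \emph{each} scale $r_n\to 0^+$ \emph{one} of the two one-sided densities tends to $1$; and crucially, it may be the left side for some scales and the right side for others. By contrast, UDT demands a \emph{single} family of thresholds $\gamma_n\nearrow 1$, $\delta_n\searrow 0$ that works uniformly, in the sense that $E\subseteq\bigcap_k\bigcup_{n\ge k}E^{\gamma_n,\delta_n}$. So the strategy is to engineer a set which is very dense on the left of $0$ at a sparse sequence of scales, very dense on the right at a complementary sparse sequence of scales, but such that on \emph{no} fixed scale $\delta$ can one guarantee a uniform lower bound $\gamma$ close to $1$ on \emph{both} sides simultaneously at \emph{all} points of $E$ near $0$.

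Concretely, I would work on intervals $[a_{k+1},a_k]$ with $a_k\searrow 0$ decreasing very fast (e.g. $a_k = 2^{-k!}$ or similar super-exponential spacing, to be tuned). On the "odd" intervals place a set that fills up a proportion $1-\varepsilon_k$ of that interval (with $\varepsilon_k\to 0$), but leave the "even" intervals nearly empty — or, more precisely, arrange that each even interval contains a tiny closed piece near its right endpoint so that the points of $E$ sitting there are still SOSD. The key quantitative point: choose the lengths so that from a point $x$ in an even (nearly-empty) block, looking left at radius $r=x-a_{k+1}$ sees almost nothing (density near $0$), while looking right at radius $r$ reaches into the preceding dense odd block and sees density near $1$ — this gives SOSD at such points. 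But now consider a fixed candidate scale $\delta=\delta_n$: there will be points $x\in E$ arbitrarily close to $0$ (hence with all their relevant radii $\le\delta$) for which \emph{both} the left-density up to some $r\le\delta$ and the right-density up to some $r'\le\delta$ are bounded away from $1$, because the "bad" (empty) portions are longer than $\delta$ at that location. Hence $x\notin E^{\gamma,\delta}$ for any $\gamma$ close to $1$, and since this happens cofinally, $E\not\subseteq\bigcap_k\bigcup_{n\ge k}E^{\gamma_n,\delta_n}$ for any admissible sequences. One also must check UDT fails at $0$ itself, which is the natural accumulation point to scrutinize.

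The steps, in order, would be: (1) fix rapidly decreasing sequences $a_k\searrow 0$ and $\varepsilon_k\searrow 0$, and define $E$ explicitly as $\{0\}$ together with, on each interval $[a_{k+1},a_k]$, a single closed subinterval (or finite union of closed subintervals) whose placement alternates between "hugging the right endpoint and filling proportion $1-\varepsilon_k$" and "a negligible closed sliver hugging the right endpoint"; make sure $E$ is closed by controlling the geometry near $0$. (2) Verify SOSD at every $x\in E$: for $x$ in a dense block this is immediate from the high local density; for $x$ in a sparse block or at an endpoint, show that the radius reaching the previous dense block yields right-density $\to 1$ along the relevant subsequence of scales, and that this covers \emph{all} sequences $r_n\to 0^+$ via a case analysis on where $x-r_n$ and $x+r_n$ land. (3) Verify SOSD at $0$: looking right from $0$ at radius $a_k$, the density is dominated by whether $[a_{k+1},a_k]$ is a dense or sparse block; arrange that the \emph{dense} blocks are so much longer than everything to their right (using super-exponential $a_k$) that the right-density at radius $a_k$ (for $k$ in the dense family) tends to $1$ — since $0$ has no left side to worry about (or we put a symmetric copy on the negatives), this gives SOSD at $0$. (4) Show $E$ is not UDT: suppose $\gamma_n\nearrow 1$, $\delta_n\searrow 0$ were witnesses; fix any $n$ and find a point $x\in E$ close enough to $0$ that the sparse block containing (or just left of) $x$ has length $>\delta_n$, forcing one one-sided density over a radius $\le\delta_n$ to be near $0$; then find a \emph{second} radius $\le\delta_n$ on the other side where the density is also bounded below $1$ by a fixed amount — the crux is choosing the alternating geometry so that \emph{no single} $\delta$ escapes a two-sided obstruction at arbitrarily small scales. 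Conclude $x\notin E^{\gamma_n,\delta_n}$ for all large $n$, contradicting membership of $x$ in $\bigcup_{n\ge k}E^{\gamma_n,\delta_n}$ for large $k$.

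The main obstacle I anticipate is step (4) in combination with step (2)/(3): the SOSD condition is so permissive that making it hold everywhere is easy, but one must simultaneously ensure that the failure of UDT is \emph{robust against every admissible pair of sequences} $(\gamma_n,\delta_n)$ — in particular against $\delta_n$ shrinking arbitrarily slowly relative to the $a_k$. This forces the construction to have infinitely many "two-sided obstructions" accumulating at $0$, at a \emph{geometric density of scales} (not just a sparse subsequence), so that whatever $\delta_n$ one picks, it still straddles one of these obstructions near $0$. Achieving this while keeping each individual point SOSD (which, recall, is happy with just one good side per scale) is the delicate balancing act; it will likely require that the two alternating "phases" of the construction be interleaved at every scale — e.g. within each $[a_{k+1},a_k]$ one further subdivides into a left sub-block that is near-empty on its left portion and a right sub-block that is near-full — so that points in the left sub-block see a long empty stretch to the left \emph{and} a long empty stretch further left still (in the previous block's right-empty region), yielding the needed two-sided bound, while their SOSD is rescued by an even larger radius reaching a full block. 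Getting the bookkeeping of these nested scales consistent, and confirming closedness and the behavior exactly at $0$, is where the real work lies.
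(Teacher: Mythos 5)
There is a genuine gap, and it is fatal to the construction as described. The decisive problem is that failing UDT requires, for \emph{every} admissible pair of sequences $(\gamma_n,\delta_n)$, a \emph{single} point $x\in E$ with $x\notin E^{\gamma_n,\delta_n}$ for all large $n$; it is not enough to produce, for each $n$ separately, points near $0$ outside $E^{\gamma_n,\delta_n}$, which is what your step (4) actually delivers ("since this happens cofinally\dots"). Worse, the concrete geometry you propose cannot be repaired by sharpening that step: if each block in $[a_{k+1},a_k]$ is a finite union of nondegenerate closed intervals, then every point $x\in E\setminus\{0\}$ lies in a closed interval contained in $E$, so there is $\delta(x)>0$ with $x\in E^{\gamma,\delta}$ for \emph{all} $\gamma\le 1$ and all $\delta\le\delta(x)$. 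And since you arrange SOSD at $0$, the function $m_0(\delta)=\inf_{0<r\le\delta}\max\bigl\{|E\cap[-r,0]|/r,\,|E\cap[0,r]|/r\bigr\}$ tends to $1$, so the set-builder may pick any $\gamma_n\nearrow 1$ and then $\delta_n\searrow 0$ fast enough that $m_0(\delta_n)\ge\gamma_n$; with this choice every point of your set (including $0$) lies in $E^{\gamma_n,\delta_n}$ for all large $n$. Thus your $E$ would in fact be UDT (indeed it satisfies even the stronger SUDT-type inclusion), no matter how the dense/sparse blocks alternate. The moral is that a set which is SOSD at a point $x$ can never have that \emph{fixed} $x$ witness failure of UDT, because $\delta_n$ may be chosen after the set is known; the bad point must depend on $(\gamma_n,\delta_n)$ and be produced by a limiting argument.

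This is exactly what the paper's proof supplies and your sketch lacks. The paper removes, around every point of a recursively defined countable set $A=\{a_{n_1,\dots,n_k}\}$ that accumulates on itself at all locations and all scales, a pair of tiny gaps creating a two-sided density deficit $2\alpha_k=2\cdot 10^{-k}$ at the specific scale $\tfrac12 r_{n_1,\dots,n_k}$; the deficit tends to $0$ in $k$ (which is what saves SOSD everywhere), but for each level $k$ the deficient points occur at arbitrarily small scales inside any previously chosen neighborhood. Given arbitrary $(\widetilde\gamma_n,\widetilde\delta_n)$, the paper first reduces them to its internal sequences $\gamma_k=1-10^{-k}$, $\delta_k$, and then runs a nested-interval diagonalization: closed intervals $I_1\supset I_2\supset\cdots$ with $I_k\cap E^{\gamma_k,\delta_k}=\emptyset$, each $I_{k+1}$ chosen around a deeper point of $A$ inside $I_k$ whose bad scale is below $\delta_{k+1}$; closedness of $E$ then yields a single point $x^*\in E\cap\bigcap_k I_k$ outside $E^{\gamma_k,\delta_k}$ for all $k$. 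Your closing remarks about interleaving obstructions "at every scale" gesture toward this, but without (a) obstructions placed self-similarly throughout the set rather than only accumulating at one point, and (b) the compactness/diagonal extraction of a single bad point for arbitrary sequences, the proposal does not prove the theorem.
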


 \begin{proof}
We will define a closed strongly one-sided dense set $E$ with the following property: 
for any sequences $\gamma_n\nearrow 1$ and $\delta_n\searrow 0$, we can find $ x^*=x_{(\gamma_n, \delta_n)_{n\in\mathbb{N}}}\in E$ such that 
$ x^* \notin  \bigcap_{k=1}^{\infty}  \bigcup_{n=k}^{\infty}E^{\gamma_n,\delta_n}$.
Hence $E$ cannot be UDT: the points $x_{(\gamma_n, \delta_n)_{n\in\mathbb{N}}}$ demonstrate that no sequence $\gamma_n, \delta_n$ can be a valid choice in the definition.

In order to define this set $E$, we will recursively define sequences. 
First, we define $a_n = 2^{-n}$. 
Proceeding  by recursion  with respect to  $k$, if $a_{n_1,\ldots, n_k}\in\mathbb{R}$ is already defined for $k$ and all $n_1,\ldots,n_k \in \mathbb{N}$, then we define

\begin{equation}\label{seq def}
a_{n_1,n_2,...,n_k,n} = a_{n_1,n_2,...,n_{k-1},n_k+1}+\frac1{2^{n+3}} r_{n_1, ..., n_k},
\end{equation}
where
$$r_{n_1, ..., n_k} = a_{n_1, ..., n_k} - a_{n_1, ...,n_{k-1}, n_k+1}.$$

To get familiar with this definition, and for later use we calculate
\begin{equation}\label{*calcdef}
a_{1}=2^{-1},\ a_{2}=2^{-2},\ a_{11}=a_{2}+\frac{1}{2^{4}}(a_{1}-a_{2})=2^{-2}+2^{-2}\cdot 2^{-4},
\end{equation} 
$$a_{12}=a_{2}+\frac{1}{2^{5}}(a_{1}-a_{2})=2^{-2}+2^{-2}\cdot 2^{-5},\ 
r_{11}:=a_{11}-a_{12}=2^{-2}\cdot 2^{-5}.$$ 
 \begin{figure}[!hb]
 \begin{center}
\includegraphics[width=1 \textwidth] {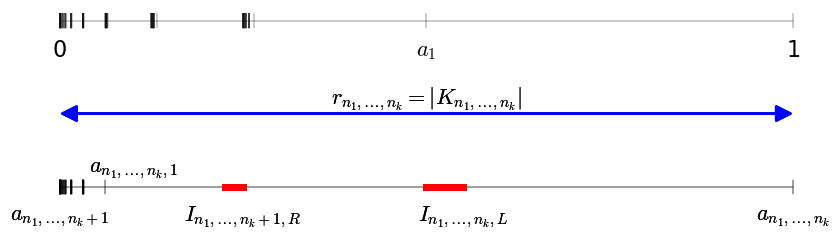}
\caption{The set $A$  and its part with some objects used in the proof}\label{fig_egesz}
\end{center}
\end{figure}


 Note that  for the decreasing sequence $(a_{n_1, ..., n_k, n})_{n=1}^{\infty}$
 \begin{equation}\label{*mondec}
 \lim_{n\to\infty} a_{n_1, ..., n_k, n} = a_{n_1, ..., n_k+1},
 \end{equation}
  and the following conditions are satisfied:
 \begin{enumerate}[(i)]
\item  $a_{n_1, ...,n_{k-1}, n_k+1} < a_{n_1, ..., n_k, n} < a_{n_1, ..., n_k}$  for any $k$, any indices $n_1, ..., n_k$, and $n \in \mathbb{N}$,
\item \label{uniformity} 
$$ a_{n_1, ..., n_{k-1}, n_k+1} - a_{n_1, ...,n_{k-1}, n_k+2} = \frac{a_{n_1, ...,n_{k-1}, n_k} - a_{n_1, ...,n_{k-1}, n_k+1}}{2}$$
for any $k$ and any indices $n_1, ..., n_k$.
\end{enumerate}


For each $k\in  {\ensuremath {\mathbb N}}$ define $A_k=\{a_{n_1,n_2,\ldots,n_k} \,:\, n_1,n_2,\ldots,n_k \in  {\ensuremath {\mathbb N}}\}$ and let $A=\bigcup_{k=1}^\infty A_k$. 
 Observe that $A \subset (0,\frac12]$. 
The conditions above yield that the set $A$ 
is disjoint from all the sets
$$ \left[a_{n_1, ..., n_k} - \frac12r_{n_1, ..., n_k}, a_{n_1, ..., n_k}-\frac14r_{n_1,...,n_k}\right] \cup \left[a_{n_1, ..., n_k} + \frac{1}{4}r_{n_1, ..., n_k}, a_{n_1, ..., n_k} + \frac12r_{n_1, ..., n_k} \right].$$

Set $\alpha_k=10^{-k}$, $\gamma_k=1-\alpha_k$ and for $n_1,n_2,\ldots,n_k \in  {\ensuremath {\mathbb N}}$ define 
$$I_{n_1,n_2,...,n_k}^L=\left(a_{n_1, ..., n_k} - \frac12r_{n_1, ..., n_k}, a_{n_1, ..., n_k}  -\Big(\frac12 - \alpha_k\Big)  r_{n_1,...n_k}\right),$$
$$I_{n_1,n_2,...,n_k}^R=\left(a_{n_1, ..., n_k} + \Big(\frac12-\alpha_k\Big)r_{n_1, ..., n_k}, a_{n_1, ..., n_k}+\frac12r_{n_1,...n_k}\right),$$
and 
$$I_{n_1,n_2,...,n_k}=I_{n_1,n_2,...,n_k}^L\cup I_{n_1,n_2,...,n_k}^R.$$

Observe that
\begin{equation}\label{disjoint}
\begin{gathered}
\{\overline{I}_{n_1,n_2,...,n_k}^L : k,n_1,n_2,\ldots,n_k \in\N\} \cup \{\overline{I}_{n_1,n_2,...,n_k}^R : k,n_1,n_2,\ldots,n_k \in\N\}\\
\text{consists of pairwise disjoint closed intervals.}
\end{gathered}
\end{equation}
We also define 
$$J_{n_1,n_2,...,n_k}^L=\left[a_{n_1, ..., n_k} - \frac12r_{n_1, ..., n_k}, a_{n_1, ..., n_k}\right]$$
and 
$$J_{n_1,n_2,...,n_k}^R=\left[a_{n_1, ..., n_k}, a_{n_1, ..., n_k}+\frac12r_{n_1,...n_k}\right].$$
Then set 
$$E=[-1,1]\backslash  \bigcup_{n_1,...,n_k\in  {\ensuremath {\mathbb N}}} I_{n_1,...,n_k}.$$

We first demonstrate that $E$ is not UDT.   Suppose otherwise, so there exist $\widetilde{\gamma}_n \nearrow 1$ and 
$\widetilde{\delta}_n \searrow 0$ such that 
\begin{equation}\label{E subset}
E\subset  \bigcap_{k=1}^{\infty}  \bigcup_{n=k}^{\infty}E^{\widetilde{\gamma}_n,\widetilde\delta_n}.
 \end{equation}

For each $n\in  {\ensuremath {\mathbb N}}$ let 
$$\delta_n=\min\{\widetilde\delta_{n'} \,:\, \widetilde\gamma_{n'} < \gamma_{n+1} =1-10^{-(n+1)} \}.$$
Then for all $n'$ satisfying $\gamma_n \le \widetilde\gamma_{n'} < \gamma_{n+1}$ we have 
$E^{\widetilde\gamma_{n'},\widetilde\delta_{n'}}\subset E^{\gamma_n,\delta_n}$ and therefore from \eqref{E subset} we get

 \begin{equation}\label{E subset2}
E\subset  \bigcap_{k=1}^{\infty}  \bigcup_{n=k}^{\infty}E^{{\gamma}_n,\delta_n}.
 \end{equation}

From the definitions it follows that
if $J=J_{n_1,...,n_k}^L$ or $J=J_{n_1,...,n_k}^R$, where $n_k \ge  1 $, we have 
$$ \frac{|E \cap J|}{|J|}  \le 1-2\alpha_k< \gamma_k.$$
It follows from this that for each $x =a_{n_1,...,n_k} \in A$ we have 
 \begin{equation}\label{gamma k ineq}
\begin{gathered}
\max\left\{\frac{|(x-r,x)\cap E|}{r},\frac{|(x,x+r)\cap E|}{r}\right\}<\gamma_k, \\
\text{    for    } r=|J_{n_1,...,n_k}^L|=|J_{n_1,...,n_k}^R|=\frac12 r_{n_1,...,n_k}.
\end{gathered}
\end{equation}
 Then, by continuity inequality \eqref{gamma k ineq} holds in a neighborhood of $x$ as well.   

We now describe how to choose a point $x^*\in E$ such that $x^* \notin  \bigcap_{k=1}^{\infty}  \bigcup_{n=k}^{\infty}E^{{\gamma}_n,\delta_n},$ which will give the desired contradiction to  \eqref{E subset2}.   To streamline the notation we use the following convention:
given $x=a_{n_1,...,n_k} \in A$, we define $r_x=\frac12r_{n_1,...,n_k}=|J_{n_1,...,n_k}^L|=|J_{n_1,...,n_k}^R|.$ 
If $n_k$ is sufficiently large, then $r_x < \delta_k$ and 
hence from \eqref{gamma k ineq} we know that there is a  neighborhood  $U_x$ of $x$ such that
 \begin{equation}\label{x not in E^}
U_x \cap  E^{\gamma_k,\delta_k}=\emptyset.
 \end{equation}



Thus, we can choose $n_1 \in \N$ and a closed interval $I_1$ containing $a_{n_1}$ in its interior  such that $n_1>1$ and $I_1 \cap E^{\gamma_1,\delta_1}=\emptyset.$  Proceeding inductively and using \eqref{x not in E^}, we choose a sequence of 
integers $\{n_k\}$ and closed intervals $\{I_k\}$ such that for all $k \in  {\ensuremath {\mathbb N}}$ we have
 \begin{itemize} 
 
 \item $n_k>1,$
 
 \item  $a_{n_1-1,n_2-1,\ldots,n_{k-1}-1,n_k}\in \text{    int    }(I_k),$  (keeping in mind \eqref{*mondec}) 
 
 \item $I_k \supset I_{k+1},$

\item $I_k \cap E^{\gamma_k,\delta_k} = \emptyset$.

\end{itemize} 

Since $E$ is closed, it follows that $E \cap (\bigcap_{k=1}^\infty I_k )\neq \emptyset$ and for any
$x^* \in  E \cap (\bigcap_{k=1}^\infty I_k) $ we have that $x^* \notin  \bigcap_{k=1}^{\infty}  \bigcup_{n=k}^{\infty}E^{{\gamma}_n,\delta_n}.$   This 
 concludes  the argument that $E$ is not UDT. 

Next we show that all points  $x\in E$ are strong one-sided density points of $E$.

 
First suppose that $x \notin \overline{A}$. 
In this case we claim that 
\begin{equation}\label{intervallumbeli}
\text{there is a closed interval $I$ such that $x \in I \subset E$ }
\end{equation}
and therefore $E$ is strongly one-sided dense at $x$. 
To see the truth of our claim let 
\begin{equation}\label{J def}
\begin{gathered}
\mathcal{J}=\bigcup_{k=1}^\infty \mathcal{J}^k, \text{    where    } \\
\mathcal{J}^k=\Big \{[ {a_ {n_1,...,n_k}}-\frac12 r_{n_1,...,n_k},{a_ {n_1,...,n_k}}+\frac12r_{n_1,...,n_k}] \,:\, n_1,...,n_k \in  {\ensuremath {\mathbb N}}\Big \}.
\end{gathered}
 \end{equation}
For each $J=[ {a_ {n_1,...,n_k}}-\frac12 r_{n_1,...,n_k},{a_ {n_1,...,n_k}}+\frac12r_{n_1,...,n_k}] \in \mathcal{J}$ we define $J'=I_{{n_1,...,n_k}}$. 
For $ \epsilon >0$, define $\mathcal{J}_ \epsilon=\{J \in \mathcal{J} \,:\, |J| \ge  \epsilon\}$  and let 
$$E_ \epsilon =[-1,1] \backslash \bigcup_{J \in \mathcal{J}_ \epsilon} J'.$$
  Then each $E_ \epsilon$ is a finite union of closed intervals and $E = \bigcap_{\epsilon>0} E_ \epsilon.$  Choose $ \epsilon = \frac12 \text{    dist    }(\{x\},\overline{A}).$  
  Thus, we have $x \in I' \subset E_ \epsilon$ for some closed interval $I'$. It may happen that $x$ is an endpoint of $I'$, but  $\ds I'$ is non-degenerate by \eqref{disjoint}.
Observe that $E$ is constructed from $E_ \epsilon$ by removing unions of open intervals $J'\subset J$, where $J$ is centered at  a 
point of $A$ and $|J| < \epsilon$. 
Since $ \epsilon \leq \frac12 \text{    dist    }(\{x\},\overline{A})$, it follows that the distance from $x$ to any of the intervals being removed is at least $\frac12 \text{    dist    }(\{x\},A)$ and therefore there must be a
 non-degenerate closed subinterval $I$ of $I'$ such that $x \in I \subset E$.   


 Now suppose that $x \in \overline{A}$  and recall that  $\alpha_k=10^{-k}$. 
 
To finish the proof we need the following technical lemma:

 \begin{lemma}\label{interval density}
For any $ {n_1,\ldots,n_{k}} \in  {\ensuremath {\mathbb N}}$, we let $K_{{n_1,...,n_{k}}}=[a_{{n_1,...,n_{k-1},n_k+1}}, {a_ {n_1,..., n_{k-1}, n_k}}]$. 
Then 
 \begin{equation}\label{K density}
|E \cap K_{{n_1,...,n_{k}}}| \ge (1-2\alpha_k) |K_{{n_1,...,n_{k}}}| \text{    for every    }  {n_1,...,n_{k}} \in  {\ensuremath {\mathbb N}}.
 \end{equation}
\end{lemma}

 \begin{proof}

For $n\in  {\ensuremath {\mathbb N}}$ we set 
$$E^n=[-1,1]\backslash  \bigcup_{\substack{n_1,...,n_k\in  {\ensuremath {\mathbb N}} \\ k \le n}} I_{n_1,...,n_k}.$$
Note that $E = \bigcap_{n=1}^\infty E^n$.

Let $K=K_{{n_1,...,n_{k}}}=[a_{{n_1,...,n_{k-1},n_k+1}}, {a_ {n_1,..., n_{k-1}, n_k}}]$. 
Consider the set $E^k \cap K$.  
This set  consists of  the interval $K$ with two open intervals $I_{{n_1,...,n_k}}^L$ and $I_{{n_1,...,n_k}+1}^R$ of total length $\frac32 \alpha_k |K|$ removed. 
At the next stage of the construction, to create the set $E^{k+1}\cap K$, for each $J \in \mathcal{J}^{k+1}$ such that $J \subset K$ we remove two open intervals of total length equal to 
$2 \alpha_{k+1} |J|$. 
Since the intervals in $\mathcal{J}^{k+1}$ are non-overlapping, it follows that the total length of the intervals removed from $K$ at the $(k+1)$st stage of the construction is less than
$2\alpha_{k+1} |K|$. 
An entirely analogous argument shows that at the $j$th stage of the construction where $j >k$, the total length of the intervals removed from $K$ is no more than
$2 \alpha_j |K|$. 
It follows that 
$$|E\cap K| \ge \Big (1 - \frac32\alpha_k - \sum_{n=k+1}^\infty 2\alpha_n\Big ) |K|> (1-2\alpha_k)|K|,$$ as desired.
\end{proof}

 Note, first of all, that if $x \in A$, then from the construction of $E$ we see that there exists a closed interval $I=[x-\delta,x]$ such that 
$I \subset E$ and therefore $E$ is strongly one-sided dense at $x$. 

So we may assume that $x \notin A$. 
Also, note that $0 \in [-1,0]\subset E$ so $E$ is strongly one-sided dense at $0$ and therefore we may assume that $x>0$.   

Note that since $x \in \overline{A}\backslash A$, we can choose a sequence of indices $n_1,n_2, \ldots$ and a nested sequence of closed intervals 
$K'_1\supset K'_2 \supset K'_3 \supset \ldots$ such that for each $k \in  {\ensuremath {\mathbb N}}$ we have 
$x \in K'_k= K_{{n_1,...,n_{k}}}$.
For each $k \in  {\ensuremath {\mathbb N}}$ we let $r'_k = |K'_k|$ so that $r'_k \searrow 0$.   
 Observe  that 
\begin{equation}\label{*capx}
\{x\}=\bigcap_{k=1}^\infty K'_k. 
\end{equation}

Assume that $r'_{k+1} \le r \le r'_{k}$. 
We consider two cases: 

\noindent
{\bf Case 1:} Assume that $\frac1{64} r'_{k} \le r \le r'_{k}$.\\
In this case we first assume that $n_k > 1$. 
Let $K=K_{{n_1,...,n_{k}}}=[a_{{n_1,...,n_{k-1},n_{k}+1}},a_{n_1,...,n_k}]$ and $K_*=K_{n_1,...,n_{k-1},n_k-1}=[a_{{n_1,...,n_k}},a_{{n_1,...,n_{k-1},n_k-1}}]$. 
Then $[x,x+r] \subset K \cup K_*$ and it follows from Lemma \ref{interval density} that 
$$|E^c \cap [x,x+r]| \le 2\alpha_k (|K|+|K_*|)= 6 \alpha_k r'_{k} \le 384 \alpha_k r,$$
so we get 
 \begin{equation}\label{case 1}
|E \cap [x,x+r]| \ge (1-384 \alpha_k) r.
 \end{equation}
 
A similar argument gives the same estimate if $n_k=1$. 
We leave the details up to the reader.

\noindent
{\bf Case 2:}   Assume that $r'_{k+1} \le r \le \frac1{64} r'_{k}$.   

 Note that by \eqref{seq def}, we have $|K_{{n_1,...,n_k,1}}| =\frac1{32}r'_k > r'_{k+1}$ and therefore $n_{k+1}\ge 2$.  

It follows that  $x \in  K'_{k+1}=K_{{n_1,...,n_{k},n_{k+1}}} \sse  [a_{{n_1,...,n_{k-1},n_{k}+1}},a_{{n_1,...,n_{k}},2}]$, 
and we obtain 
$$
[x,x+r]\subset \Big[a_{{n_1,...,n_{k-1},n_{k}+1}},a_{n_1,...,n_k,2} + \frac1{64}r'_k \Big] \subset [a_{{n_1,...,n_{k-1},n_{k}+1}}, a_{{n_1,...,n_k},1}].
$$ 
To simplify the notation at this point we define
${ \mathbf a}_l=a_{{n_1,...,n_k},l}$ and $L_l=[{ \mathbf a}_{l+1},{ \mathbf a}_l]$ so, for example, $ x \in K'_{k+1}=L_{n_{k+1}}$. 
Let $n_0$ be the largest value of $n$  such  that 
$$[x,x+r] \subset \bigcup_{l=n_0}^{n_{k+1}}L_l$$
that is $x+r\in L_{n_0}$.
As $r'_{k+1} \le r$, we have $n_0<n_{k+1}$. Thus $x\le a_{n_1,\ldots,n_k,n_0+1,1}$ and hence $[a_{n_1,\ldots,n_k,n_0+1,1},a_{n_1,\ldots,n_k,n_0+1}] \subset [x,x+r]$.
Therefore
\begin{equation}\label{x+r nagy}
\begin{split}
|[x,x+r]| 
&\ge |[a_{n_1,\ldots,n_k,n_0+1,1},a_{n_1,\ldots,n_k,n_0+1}]| 
= \frac{15}{16} r_{n_1,\ldots,n_k,n_0+1}
= \frac{15}{32} r_{n_1,\ldots,n_k,n_0} \\
&> \frac{15}{64} \sum_{l=n_0}^{n_{k+1}} 2^{n_0-l} r_{n_1,\ldots,n_k,n_0}
= \frac{15}{64} \sum_{l=n_0}^{n_{k+1}} r_{n_1,\ldots,n_k,l}
= \frac{15}{64} \sum_{l=n_0}^{n_{k+1}} |L_l|.
\end{split}
\end{equation}
Consequently 
$$
|E^c \cap [x,x+r]| 
\le \sum_{l=n_0}^{n_{k+1}} |E^c \cap L_l| 
\underset{\text{by Lemma \ref{interval density}}}\le 2\alpha_{k+1} \sum_{l=n_0}^{n_{k+1}} |L_l|
\underset{\text{by \eqref{x+r nagy}}}\le 2\alpha_{k+1} \cdot \frac{64}{15} r.
$$
 and hence 
 \begin{equation}\label{case 2}
|E \cap [x,x+r]| \ge \Big(1-\frac{128}{15}\alpha_{k+1}\Big) r.
 \end{equation}

Combining the estimates \eqref{case 1} and \eqref{case 2} and making use of the fact that $\alpha_k \searrow 0$ and $r'_k \to 0$
as $r\to 0$, we conclude that $E$ is strongly one-sided dense at $x$, as desired.  
\end{proof}

\section{ UDT $\not\Rightarrow $ SUDT, and measurable sets are SUDT modulo sets of measure zero}\label{mainb*}

 The following theorem establishes that the first implication in \eqref{eq:density_implication_chain} cannot be reversed either. 

\begin{theorem}\label{*thmudtnosudt}
There is a UDT set which is not SUDT.
\end{theorem}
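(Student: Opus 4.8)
The plan is to build a set $E$ that, near a carefully chosen point (say $0$), alternates between long ``good'' stages at which the one-sided density is close to $1$ on all scales $r\le\delta_n$, and occasional ``bad'' scales where the density drops below a fixed constant $c<1$. The alternation must be arranged so that along a subsequence of indices the good stages cover the missing behaviour of the previous bad scale, giving $E\subset\bigcap_k\bigcup_{n\ge k}E^{\gamma_n,\delta_n}$ for suitable $\gamma_n\nearrow 1$, $\delta_n\searrow 0$; but at the same time, for \emph{every} candidate pair of sequences $\gamma_n\nearrow 1$, $\delta_n\searrow 0$, infinitely many bad scales survive, so that $0\notin\bigcup_k\bigcap_{n\ge k}E^{\gamma_n,\delta_n}$. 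In other words, UDT only requires that for each $x$ and each $k$ there is \emph{some} large $n$ with $x\in E^{\gamma_n,\delta_n}$, whereas SUDT requires that from some $k$ on \emph{all} $n$ work; the construction exploits exactly this $\limsup$ versus $\liminf$ gap.

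Concretely, I would fix a decreasing sequence of scales $t_m\searrow 0$ and on the intervals $[t_{m+1},t_m]$ put, for most $m$, a nearly full-measure copy of $E$ (so the one-sided density from $0$ stays above $1-\varepsilon_m$ with $\varepsilon_m\to 0$ for all radii in that block), but for a sparse sequence of indices $m=m_j$ insert a ``notch'': a subinterval on which $E$ has measure ratio only about $c$, placed so that the one-sided averages $\frac{|(0,r)\cap E|}{r}$ at the corresponding radius $r\approx t_{m_j}$ dip below $c$. The key quantitative bookkeeping is to make the good blocks between consecutive notches so long (i.e.\ $t_{m_{j+1}}/t_{m_j}$ so small) that the density recovers to within $1/j$ of $1$ before the next notch; then defining $\gamma_n$ and $\delta_n$ by scanning through the blocks gives a valid UDT witness. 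Meanwhile, given any prospective $\gamma_n\nearrow 1$, $\delta_n\searrow 0$, one picks $n$ so large that $\gamma_n>c$ and $\delta_n$ is smaller than the radius of the $j$-th notch for $j$ large; at that notch radius $r$ one has $\max\{\dots\}<c<\gamma_n$ even though $r\le\delta_n$, so $0\notin E^{\gamma_n,\delta_n}$ for infinitely many $n$, hence $0\notin\bigcup_k\bigcap_{n\ge k}E^{\gamma_n,\delta_n}$. One should also check that at all points of $E$ other than $0$ (and the scale structure is self-similar, so only finitely many ``centres'' of notches need separate attention) the set is genuinely UDT, which follows because away from the accumulation point the local picture is eventually a finite union of intervals or a shifted copy of the same construction; a cleaner route may be to make $0$ the \emph{only} problematic point by taking $E$ to agree with a full interval except on a shrinking union of gaps clustering at $0$.

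The main obstacle I anticipate is the two-sided nature of the density in the definition of $E^{\gamma,\delta}$: since $E^{\gamma,\delta}$ uses $\max$ of the left and right one-sided ratios, a notch on the right of $0$ is harmless unless I also simultaneously spoil the left side at the same radius. So the construction has to be genuinely two-sided around $0$ — e.g.\ mirror the gap structure on $[-t_m,-t_{m+1}]$ and align the notches on both sides at matching radii — or else accept the point $0$ as a one-sided boundary point and instead plant the problematic point at an interior accumulation point where both a left and a right family of gaps pinch in. Getting the notch on both sides at the \emph{same} radius $r$, while still letting the density recover on both sides between notches, is the delicate part; everything else is routine interval arithmetic of the same flavour as in the proof of Theorem~\ref{thm:main}. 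A secondary, milder nuisance is verifying that $E$ really is UDT and not accidentally SUDT at the countably many notch centres themselves; choosing the notches to be single well-separated scales (not nested cascades) should keep each such centre on the ``good'' side.

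Finally, for the second assertion of the section — that every measurable set equals an SUDT set almost everywhere — I would argue separately and much more softly: given measurable $A$, by Lebesgue's density theorem almost every point of $A$ is a density point and almost every point of $A^c$ is a density point of $A^c$. Let $E$ be the set of density points of $A$; then $|E\triangle A|=0$, and I claim $E$ is SUDT. Indeed, at each $x\in E$ we have $\frac{|(x-r,x)\cap A|}{r}\to 1$ and $\frac{|(x,x+r)\cap A|}{r}\to 1$ as $r\to0^+$, and since $|E\triangle A|=0$ the same holds with $E$ in place of $A$; this is exactly the statement that $x$ is, for every $\gamma<1$, eventually in $E^{\gamma,\delta}$ for all small $\delta$. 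The only work is to extract \emph{uniform} sequences $\gamma_n\nearrow1$, $\delta_n\searrow0$ with $E\subset\bigcup_k\bigcap_{n\ge k}E^{\gamma_n,\delta_n}$, i.e.\ to pass from a pointwise-in-$x$ rate to a single sequence. Here one uses a standard exhaustion: write $E=\bigcup_k E_k$ where $E_k$ is the set of $x\in E$ for which $\min\{\frac{|(x-r,x)\cap E|}{r},\frac{|(x,x+r)\cap E|}{r}\}\ge 1-1/k$ for all $0<r\le 1/k$; by the density theorem $\bigcup_k E_k$ has full measure in $E$, one replaces $E$ by $\bigcup_k E_k$ (still differing from $A$ by a null set), and then $\gamma_n=1-1/n$, $\delta_n=1/n$ witness SUDT on the nose, since $x\in E_k$ forces $x\in E^{\gamma_n,\delta_n}$ for all $n\ge k$.
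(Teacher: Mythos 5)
Your plan has a fatal structural flaw: the failure of SUDT cannot be witnessed by one point fixed in advance. For a fixed $x$ put $\phi(r)=\max\bigl\{\frac{|(x-r,x)\cap E|}{r},\frac{|(x,x+r)\cap E|}{r}\bigr\}$. If $x\in E^{\gamma_n,\delta_n}$ for infinitely many $n$ with $\gamma_n\nearrow 1$, $\delta_n\searrow 0$ (which is exactly what you need for $0\in E$ to satisfy your UDT inclusion), then for every $\varepsilon>0$ there is such an $n$ with $\gamma_n>1-\varepsilon$, and membership in $E^{\gamma_n,\delta_n}$ forces $\phi(r)\ge\gamma_n>1-\varepsilon$ for \emph{all} $r\in(0,\delta_n]$; hence $\phi(r)\to 1$ as $r\to 0^+$. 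But once $\phi(r)\to 1$, for an \emph{arbitrary} $\gamma_n\nearrow 1$ one can choose $\delta_n\searrow 0$ with $\phi(r)\ge\gamma_n$ on $(0,\delta_n]$, i.e.\ $x\in\bigcap_{n}E^{\gamma_n,\delta_n}$, so $x$ lies in the $\liminf$ for suitably adapted sequences. Thus ``$0$ is in the $\limsup$ for my chosen pair'' and ``$0$ is outside the $\liminf$ for every pair'' are incompatible: at a single point the two conditions ``there exist sequences with $x$ in the $\limsup$'' and ``there exist sequences with $x$ in the $\liminf$'' are both equivalent to $x$ being a strong one-sided density point. Concretely, your notches of fixed depth $c<1$ at radii $t_{m_j}\to 0$ give $\phi(t_{m_j})\le c$, so $0\notin E^{\gamma,\delta}$ whenever $\gamma>c$ — this destroys UDT at $0$ for your own sequences, not just SUDT; and if you let the notch depths tend to $1$ so that UDT survives at $0$, then by the adaptation above $0$ can no longer defeat every candidate pair. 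The two-sided $\max$ issue you flag is real but secondary; the $\limsup$/$\liminf$ gap you want to exploit is simply invisible pointwise.

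What is needed — and what the paper does — is to let the bad point depend on the candidate sequences. The paper reuses the set $E$ of Theorem~\ref{thm:main} (which has dips of depth $\alpha_k$ at the scale of every basic interval $K_{n_1,\ldots,n_k}$, on both sides, see \eqref{kicsi}), fixes one rapidly converging pair $\gamma'_n=1-2^{-n}$, $\delta'_n=2^{-100n}$, and sets $E'=\bigcap_k\bigcup_{n\ge k}(E^{\gamma'_n,\delta'_n}\cap E)$; only a null subset of $E$ (contained in the set of singleton components) is lost, so $E'$ is UDT. Then, given an arbitrary pair $(\gamma_n,\delta_n)$, it builds by induction a nested sequence of intervals $K_{n'_1,\ldots,n'_k}$, alternating long runs of small indices (on which the density estimates \eqref{case 1}--\eqref{case 2} make $K\cap E\subset E'^{\gamma'_m,\delta'_m}$ for suitable $m$) with a single huge index chosen so that $2r_{n'_1,\ldots,n'_{k+1}}<\delta_{m_j}$ while $\gamma_{m_j}>1-\alpha_{k}/4$; the unique point $x'$ of the intersection lies in $E'$ but misses $E'^{\gamma_{m_j},\delta_{m_j}}$ for all $j$. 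So your local ``notch'' mechanism is the right ingredient, but without a Cantor-type scheme in which the violating point is selected \emph{after} the sequences $(\gamma_n,\delta_n)$ are given, the argument cannot close. (Your last paragraph proves Theorem~\ref{thm:sudt_ae}, essentially as the paper does, but that is a different statement from the one at hand.)
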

\begin{proof}
First we observe that if $F\subset\R$ is measurable, and $x\in I\subset F$ where $I$ is a non-degenerate interval, then $x\in\bigcap_{k=1}^\infty\bigcup_{n=k}^\infty F^{\gamma_n,\delta_n}$ for any sequences $\gamma_n\nearrow 1$ and $\delta_n\searrow 0$.

We will use the set $E$ defined in the proof of Theorem \ref{thm:main}. This set will be modified 
in a set of measure zero to obtain the set $E'$ which will be a UDT  set, but not a SUDT set.

First we claim that
\begin{equation}\label{nulla}
E_0:=\big\{x\in E :  \text{ }\{x\}\text{ is a  component of }E\big\} \text{ is  a set  of measure $0$.}
\end{equation}

From \eqref{intervallumbeli} it follows that $E_0\subset  \overline{A}$.

According to Lebesgue's density theorem, it is enough to prove that $\overline{A}$ has no density points. 
Take an arbitrary $x\in \overline{A}$.
By the argument following the proof of Lemma \ref{interval density} and ending at \eqref{*capx},
 we can suppose that $x \notin A$  and we can choose a sequence of indices $n_1,n_2, \ldots$ such that  $\{x\}=\bigcap_{k=1}^\infty K_{{n_1,...,n_{k}}}$. 
As  $(a_{{n_1,...,n_{k},1}},a_{{n_1,...,n_{k}}})\cap \overline{A} = \emptyset$  for every $k\in\N$, we have 
$$
|\overline{A}\cap K_{{n_1,...,n_{k}}}|\le \frac{|[a_{{n_1,...,n_{k-1},n_{k}+1}},a_{{n_1,...,n_{k},1}}]|}{r_{{n_1,...,n_{k}}}} = \frac{1}{16}.
$$
Hence $x$ cannot be a density point of $\overline{A}$.
This implies \eqref{nulla}.

Let $\gamma'_n := 1-2^{-n}$ and $\delta'_n := 2^{-100n}$ for every $n\in\N$, and set
$$E':= \bigcap_{k=1}^\infty\bigcup_{n=k}^\infty (E^{\gamma'_n,\delta'_n}\cap E).
$$ 
By the observation at the beginning of this proof $E\setminus E_0\subset E'$.
Hence \eqref{nulla} implies that $E' \subset \bigcap_{k=1}^\infty\bigcup_{n=k}^\infty E'^{\gamma'_n,\delta'_n}$, which means that $E'$ is UDT.

We claim that for every $(\gamma_n)_{n=1}^\infty$ and $(\delta_n)_{n=1}^\infty$ 
with $\gamma_n\nearrow 1$ and $\delta_n\searrow 0$, there is an $x'\in E'\setminus \bigcup_{k=1}^\infty\bigcap_{n=k}^\infty E'^{\gamma_n,\delta_n}$, that is, $E'$ is not SUDT.

Fix $(\gamma_n)_{n=1}^\infty$ and $(\delta_n)_{n=1}^\infty$.

Let $k,n_1,\ldots,n_k\in\N$ and $x\in K_{n_1,\ldots,n_k}\cap E'$. 
As $I_{n_1,\ldots,n_{k-1},n_k}^R\subset[x,x+2r_{n_1,\ldots,n_k}]$ and 
$I_{n_1,\ldots,n_{k-1},n_k+1}^L\subset  [x-2r_{n_1,\ldots,n_k},x]$ , we have
\begin{equation}\label{kicsi}
\begin{gathered}
\max\left\{\frac{|[x-2r_{n_1,\ldots,n_k},x]\cap E|}{2r_{n_1,\ldots,n_k}}, \frac{|[x,x+2r_{n_1,\ldots,n_k}]\cap E|}{2r_{n_1,\ldots,n_k}}\right\} \\
\le \frac{2r_{n_1,\ldots,n_k}-|I_{n_1,\ldots,n_{k-1},n_k+1}^L|} {2r_{n_1,\ldots,n_k}}
= 1-\frac{\alpha_k r_{n_1,\ldots,n_{k-1},n_k+1}}{2r_{n_1,\ldots,n_k}}
= 1-\frac{\alpha_k}{4}.
\end{gathered}
\end{equation}

On the other hand, by \eqref{case 1} and \eqref{case 2},
\begin{equation}\label{nagy}
\begin{gathered}
\text{if $x\in \overline{A}\cap K_{n_1,\ldots,n_k}$ and $r\in \left(0,r_{n_1,\ldots,n_k}\right)$, then } \\
 \max\left\{\frac{|[x-r,x]\cap E|}r, \frac{|[x,x+r]\cap E|}r\right\}\ge 1-384\alpha_k.
\end{gathered}
\end{equation}

We will define sequences $(n'_k)_{k=1}^\infty$, $(k'_j)_{j=1}^\infty$, $(m_j)_{j=1}^\infty$ and $(m'_j)_{j=1}^\infty$ by induction 
 such that the latter three of them are strictly increasing and for every $j\in\N$
\begin{enumerate}[(a)]
\item\label{resze} $K_{n'_1,\ldots,n'_{k'_j}}\cap E \subset {E'}^{\gamma'_{m'_j},\delta'_{m'_j}}$, 
\item\label{diszjunkt} $K_{n'_1,\ldots,n'_{k'_j},n'_{k'_j+1}}\cap E'^{\gamma_{m_j},\delta_{m_j}} = \emptyset$.
\end{enumerate}  
 The existence of such sequences would conclude the proof.  Notably, as $K_{n'_1,\ldots,n'_{k}}\cap E$ is a non-empty closed set for every $k$, and $\lim_{k\to\infty}|K_{n'_1,\ldots,n'_{k}}|=0$, the set 
$$\bigcap_{k=1}^\infty K_{n'_1,\ldots,n'_{k}}\cap E$$
consists of a single point.  If this point is denoted by $x'$, then  by \eqref{resze}, $x' \in \bigcap_{k=1}^\infty\bigcup_{n=k}^\infty (E'^{\gamma'_n,\delta'_n}\cap E) = E'$.
However, $x'\notin \bigcup_{k=1}^\infty\bigcap_{n=k}^\infty E'^{\gamma_n,\delta_n}$ by \eqref{diszjunkt}. 
 Consequently, what remains  in order to complete  the proof is the construction of these sequences. 

Set $m'_1:=1$, $k'_1:=10$, and $n'_k:=1$ for every $k\in \N\cap[1,k'_1]$.
Recall \eqref{*calcdef}. 
Since $\delta'_{m'_1}=2^{-100}<2^{-2-9\cdot5}=r_{n'_1,\ldots,n'_{k'_1}}$ and $\gamma'_{m'_1} = 1-2^{-1} < 1-384\cdot10^{-10} = 1-384\alpha_{k'_1}$, 
 by \eqref{nagy} \eqref{resze} is satisfied for $j=1$.

Now, suppose that $j\in\N$ and we have already defined $k'_1,\ldots,k'_j$, $m'_1,\ldots,m'_j$, $m_1,\ldots,m_{j-1}$ and $n'_1,\ldots, n'_{k'_j}$. 

Let $m_j\in\N$ be such that $\gamma_{m_j}>1-\frac{\alpha_{k'_j}}4$, and $m_j>m_{j-1}$ if $j>1$. 
Take a large enough $n'_{k'_j+1}$ to satisfy $2r_{n'_1,\ldots , n'_{k'_j},n'_{k'_j+1}}<\delta_{m_j}$. Then \eqref{diszjunkt} is satisfied by \eqref{kicsi}.

Observe that 
$$
\lim_{i\to\infty} \frac{384\alpha_{k'_{j}+i}}{1-\gamma'_{m'_{j}+i}} 
= \lim_{i\to\infty} \frac{384\cdot10^{-({k'_{j}+i})}}{2^{-(m'_{j}+i)}} 
= 0
$$
and 
$$
\lim_{i\to\infty} \frac{r_{n'_1,\ldots,n'_{k'_j},1_{i}}}{\delta'_{m'_{j}+i}} 
= \lim_{i\to\infty} \frac{r_{n'_1,\ldots,n'_{k'_j}}\cdot2^{-5i}}{2^{-100(m'_{j}+i)}} 
= \infty
$$
where $1_{i}=\underbrace{1,\ldots,1}_{i \text{ times  }}$.
Hence we can select
large enough $i_{j}\in\N$ 
such that $\gamma'_{m'_{j}+i_{j}} < 1-384\alpha_{k'_{j}+i_{j}}$ and $\delta'_{m'_{j}+i_{j}} < r_{n'_1,\ldots,n'_{k'_j},1_{i_{j}}}$. 

Let $m'_{j+1} := m'_j+i_{j}$ and $k'_{j+1} := k'_j+i_{j}$.
Set $n'_k:=1$ for every $k\in[n'_{k'_j+2},n'_{k'_{j+1}}]\cap\N$. Then \eqref{resze} is true by \eqref{nagy}. This concludes the proof.
\end{proof}

 As one can expect, each   measurable set equals an SUDT set  modulo a set of measure zero. We prove this in the next simple theorem. 

 \begin{theorem} \label{thm:sudt_ae}
If $E\subset\R$ is measurable, then there exists $E_{*}\subseteq E$ such that $E_{*}$ has SUDT, and $|E\setminus E_{*}| = 0$. 
That is, every measurable set equals an SUDT set modulo a set of measure zero.
\end{theorem}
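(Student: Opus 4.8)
The plan is to take for $E_*$ almost every Lebesgue density point of $E$, but to discard an additional carefully chosen null set so that $E_*$ splits into countably many pieces on each of which the density of $E$ tends to $1$ at a \emph{uniform} rate; a single pair of sequences $(\gamma_n,\delta_n)$ witnessing SUDT is then produced by diagonalising over these pieces. To set up, I would let $D$ be the set of points $x\in E$ that are density points of $E$; by the Lebesgue density theorem $D\subseteq E$ and $|E\setminus D|=0$, and every $x\in D$ has both one-sided densities of $E$ equal to $1$, so $\min\{|(x-r,x)\cap E|/r,\ |(x,x+r)\cap E|/r\}\to 1$ as $r\to 0^+$. For $x\in D$ and $m\in\N$ put
$$\eta_m(x)=\sup\Big\{\delta>0:\ \forall\,r\in(0,\delta],\ \min\Big\{\frac{|(x-r,x)\cap E|}{r},\frac{|(x,x+r)\cap E|}{r}\Big\}\ge 1-\frac1m\Big\},$$
so $\eta_m(x)>0$. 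It should be stressed at once that taking $E_*=D$ does \emph{not} work in general: the density may approach $1$ arbitrarily slowly at density points, with no common modulus, and then no single $\gamma_n\nearrow1$, $\delta_n\searrow0$ can witness SUDT for $D$. This is exactly why the further null set must be removed.

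Next, for each fixed $m$ the sets $D_{m,j}=\{x\in D:\eta_m(x)\ge 1/j\}$ increase with $j$ to $D$, hence $|D\setminus D_{m,j}|\to 0$ as $j\to\infty$. For each $p\in\N$ I would pick indices $j_{p,m}$, nondecreasing in $m$, with $|D\setminus D_{m,j_{p,m}}|<2^{-m-p}$, and set $C_p=\bigcap_{m=1}^\infty D_{m,j_{p,m}}$; then $|D\setminus C_p|\le 2^{-p}$, so $E_*:=\bigcup_{p=1}^\infty C_p$ satisfies $|D\setminus E_*|=0$, whence $|E\setminus E_*|=0$ and $|E_*\,\Delta\,E|=0$. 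In particular $E_*\subseteq E$, and since the definition of $E^{\gamma,\delta}$ only sees $E$ through quantities of the form $|(x\pm r,x)\cap E|$, we have $E_*^{\gamma,\delta}=E^{\gamma,\delta}$ for all $\gamma,\delta>0$. The gain is uniformity on each piece: if $x\in C_p$ then $\eta_m(x)\ge 1/j_{p,m}$ for every $m$, and therefore $x\in E_*^{\,1-1/m,\ 1/j_{p,m}}$ for every $m\in\N$.

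It remains to merge the countably many sequences $m\mapsto 1/j_{p,m}$ into one. I would set $\gamma_n=1-1/n$ and $\delta_n=\min\big(2^{-n},\ \min_{1\le p\le n}1/j_{p,n}\big)$, which is positive, nonincreasing and tends to $0$ (replace it by $3^{-n}\delta_n$ if strict monotonicity is wanted; this only shrinks $\delta_n$, which is harmless below); and for each fixed $p$ one has $\delta_n\le 1/j_{p,n}$ for all $n\ge p$. Consequently, for $x\in C_p$ and $n\ge p$, applying the uniform estimate with $m=n$ together with the obvious monotonicity $E_*^{\gamma,\delta'}\supseteq E_*^{\gamma,\delta}$ for $\delta'\le\delta$, we get $x\in E_*^{\,1-1/n,\ 1/j_{p,n}}\subseteq E_*^{\,\gamma_n,\delta_n}$, hence $x\in\bigcap_{n\ge p}E_*^{\gamma_n,\delta_n}\subseteq\bigcup_{k=1}^\infty\bigcap_{n=k}^\infty E_*^{\gamma_n,\delta_n}$. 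As $p$ and $x\in C_p$ were arbitrary, $E_*\subseteq\bigcup_{k=1}^\infty\bigcap_{n=k}^\infty E_*^{\gamma_n,\delta_n}$, i.e.\ $E_*$ has SUDT, while $|E\setminus E_*|=0$, as required.

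The step I expect to be the main obstacle is the realisation that the density points of $E$ by themselves need not form an SUDT set, since there is no a priori uniform modulus for the convergence of the density to $1$ over them; the remedy is to discard an extra (carefully chosen) null set so as to partition $E_*$ into uniform-rate pieces $C_p$ before diagonalising. Everything else — the choice of the $j_{p,m}$, the elementary measure bookkeeping, the passage from two-sided to one-sided density, and the assembly of $(\gamma_n,\delta_n)$ — is routine.
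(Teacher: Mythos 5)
Your overall strategy (delete a null set from the set of density points so that, along a single diagonal sequence $(\gamma_n,\delta_n)$, every surviving point is eventually in $E^{\gamma_n,\delta_n}$) is sound and is close in spirit to the paper's proof, which achieves the same thing more quickly by choosing $\delta_n$ with $|(E\cap[-n,n])\setminus E^{\gamma_n,\delta_n}|<2^{-n}$ and invoking Borel--Cantelli. However, as written your argument has a genuine gap at the step ``$D_{m,j}$ increase with $j$ to $D$, hence $|D\setminus D_{m,j}|\to 0$.'' This implication uses continuity of measure from above and is only valid when the sets $D\setminus D_{m,j}$ eventually have finite measure; the theorem is stated for an arbitrary measurable $E\subset\R$, which may have infinite measure, and then the claim can fail outright. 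For example, take $E=\bigcup_{N\in\Z}[N,N+\tfrac12]$: for $x\in(N,N+\tfrac12)$ at distance $d$ from the right endpoint one has $\eta_m(x)\le \tfrac{m}{m-1}\,d$ (the right-hand density drops below $1-\tfrac1m$ at scales just above $d$), so $\{x\in D:\eta_m(x)<1/j\}$ contains an interval of length comparable to $1/j$ in every component, and $|D\setminus D_{m,j}|=\infty$ for every $j$. Consequently no indices $j_{p,m}$ with $|D\setminus D_{m,j_{p,m}}|<2^{-m-p}$ exist, and the construction of the pieces $C_p$ (hence of $E_*$ and of $\delta_n$) breaks down.

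The gap is repairable by localizing exactly as the paper does with its intervals $[-n,n]$: choose $j_{p,m}$ so that $|(D\cap[-p,p])\setminus D_{m,j_{p,m}}|<2^{-m-p}$, set $C_p=[-p,p]\cap\bigcap_{m}D_{m,j_{p,m}}$ and $E_*=\bigcup_p C_p$; then for each $P$ the set $(D\setminus E_*)\cap[-P,P]$ has measure at most $\inf_{p\ge P}2^{-p}=0$, so $|D\setminus E_*|=0$, and your diagonal choice of $(\gamma_n,\delta_n)$ goes through verbatim on each $C_p$. With this correction your proof is a valid, slightly more hands-on variant of the paper's argument (explicit uniformity modulus $\eta_m$ and an exhaustion by uniform-rate pieces, instead of Borel--Cantelli); also note you prove the slightly stronger ``min'' version of the density condition, where the definition of $E^{\gamma,\delta}$ only requires the ``max,'' which is harmless but not needed.
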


 \begin{proof}
Fix a measurable  set  $E\subset\R$.
By the Lebesgue density theorem,  almost every  point of $E$ is a density point of the set, hence we can suppose that $E$ is dense at all of its points.

Now fix $\gamma_n \nearrow 1$.
 For a fixed element $\gamma_n$  note that $E \sse \bigcup_{\delta > 0}E^{\gamma_n, \delta}$  due to our assumption.
 Consequently, for each $n$ we can fix small enough $\delta_n$ such that 
$|E_n| < 1/2^n$, where $E_n =( E \cap [-n,n]) \setminus E^{\gamma_n, \delta_n}$. 
 Now $\sum_{n=1}^{\infty} |E_n|<\infty$ clearly holds. 
 Consequently, according to the Borel--Cantelli lemma, we have that for every $k\in\N$  almost every element of $E\cap [-k,k]$ is contained in  only finitely many $E_n$, and hence by only finitely many $(E \cap [-k,k]) \setminus E^{\gamma_n, \delta_n}$. 
 That is, except for a null set $E'\subseteq E$, any element of $E$ is in $E^{\gamma_n, \delta_n}$ for large enough $n$. 
Now let $E_{*} = E\setminus E'$.   Since  $E^{\gamma_n,\delta_n} = E_{*}^{\gamma_n,\delta_n}$,  it follows  that $E_{*}$ has SUDT.
\end{proof}

 We thank the referee for some comments which improved the exposition and organization of the paper.

\end{document}